\documentclass[11pt]{amsart}
\usepackage[margin=1.15in]{geometry}
\usepackage{amssymb,amsmath,mathtools}
\usepackage{booktabs}
\usepackage{enumitem}
\usepackage[expansion=false]{microtype}
\usepackage[hidelinks]{hyperref}

\newtheorem{theorem}{Theorem}[section]
\newtheorem{proposition}[theorem]{Proposition}
\newtheorem{lemma}[theorem]{Lemma}
\newtheorem{corollary}[theorem]{Corollary}
\newtheorem{introthm}{Theorem}

\theoremstyle{definition}
\newtheorem{example}[theorem]{Example}
\newtheorem{problem}[theorem]{Problem}
\theoremstyle{remark}
\newtheorem{remark}[theorem]{Remark}

\numberwithin{equation}{section}

\newcommand{\N}{\mathbb{N}}
\newcommand{\Z}{\mathbb{Z}}
\newcommand{\Q}{\mathbb{Q}}
\newcommand{\F}{\mathbb{F}}
\newcommand{\cS}{\mathcal{S}}
\newcommand{\legendre}[2]{\left(\frac{#1}{#2}\right)}

\title[Squares in reciprocal-sum sequences]{Perfect squares in reciprocal-sum sequences\\ and primes that are inert in quadratic fields}
\author{Mateo Matijasevick}
\author{Santiago Rodr\'iguez}
\author{Gregorio Salazar}
\subjclass[2020]{Primary 11A07, 11B37; Secondary 11R11, 37P05}
\keywords{Rational recurrences, perfect squares, sums of two squares, quadratic residues, inert primes, good reduction}

\begin{document}

\begin{abstract}
Let $k$ be a positive integer, and consider the sequences of positive rationals with $x_0\in\N$ and $x_{n+1}=k/(x_0+x_1+\dots+x_n)$. Write $x_n=a_n/b_n$ in lowest terms. We show that there is a rational constant $c>0$ such that $c\,a_n+b_n$ is a perfect power for every such sequence and every $n\ge2$ if and only if $k=h^2$ and every prime factor of $h$ is congruent to $3$ modulo $4$; in that case $c=2/h$ and the powers are squares. The proof rests on the observation that $t_n=(x_0+\dots+x_n)/h$ satisfies $t_{n+1}=t_n+1/t_n$, a recursion under which reduced fractions never cancel. This yields an exact formula for $c\,a_n+b_n$, shows that along any single sequence each prime factor of $h$ spoils at most one term, and leads to two generalizations. For arbitrary $k$ the invariant $b_n^2-\frac4k a_n^2$ is always a rational square, and it is always an integer square exactly when the primes dividing the square part of $k$ satisfy an inertness condition in $\Q(\sqrt{-k})$. For the recursions $x_{n+1}=h^2/(x_0+\dots+x_n+n\mu h)$ the role of the Gaussian integers is played by the quadratic fields $\Q(\sqrt{\mu^2-4})$, which include $\Q(\sqrt{-3})$ and every real quadratic field.
\end{abstract}

\maketitle

%======================================================================
\section{Introduction}\label{sec:intro}

The following problem was proposed and solved by S. Rodriguez and G. Salazar, after they had worked on the problem of retriangular numbers \cite{Mat-retri}.

\begin{problem}\label{prob:main}
Let $k$ be a positive integer, and let $\cS_k$ be the family of sequences $x_0,x_1,x_2,\dots$ of positive rational numbers with $x_0\in\N$ and
\begin{equation}\label{eq:rec}
x_{n+1}=\frac{k}{x_0+x_1+\dots+x_n}\qquad(n\ge0).
\end{equation}
Write $x_n=a_n/b_n$ with $a_n,b_n$ coprime positive integers. For which $k$ is there a rational number $c>0$ and an integer $m\ge2$ such that $c\,a_n+b_n$ is a perfect $m$-th power for every sequence of $\cS_k$ and every $n\ge2$?
\end{problem}

For example, for $k=9$ and $x_0=1$ the sequence begins $1,9,\frac9{10},\frac{90}{109},\frac{9810}{12781},\dots$, and with $c=\frac23$ the numbers $c\,a_n+b_n$ for $n=2,3,4$ are $16=4^2$, $169=13^2$ and $19321=139^2$ (Example~\ref{ex:k9}). The answer is as follows.

\begin{introthm}\label{thmA}
There are $c\in\Q_{>0}$ and $m\ge2$ as in Problem~\ref{prob:main} if and only if $k=h^2$ for an integer $h$ all of whose prime factors are congruent to $3$ modulo $4$ (including $h=1$). In that case $m=2$ and $c=2/h$ necessarily.
\end{introthm}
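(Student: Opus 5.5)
The plan is to linearize the recursion. Put $S_n=x_0+\dots+x_n$, so that $S_{n+1}=S_n+k/S_n$ and $x_{n+1}=k/S_n$. Then $x_2=kx_0/(x_0^2+k)$, and when $k=h^2$ the quantity $t_n=S_n/h$ satisfies $t_{n+1}=t_n+1/t_n$. If $t_n=p/q$ in lowest terms, then $t_{n+1}=(p^2+q^2)/(pq)$, and this is again in lowest terms because $\gcd(p^2+q^2,pq)=1$. This is the "no cancellation" observation from the abstract.

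\emph{Sufficiency.} Let $k=h^2$ with every prime factor of $h$ congruent to $3$ modulo $4$, and fix $n\ge2$. Write $t_{n-2}=p/q$ in lowest terms; this makes sense since $t_0=x_0/h$ is a positive rational. Then
\[
x_n=\frac{h}{t_{n-1}}=\frac{h\,pq}{p^2+q^2}.
\]
A prime $\ell\equiv3\pmod 4$ never divides $p^2+q^2$ when $\gcd(p,q)=1$. Hence $\gcd(h,p^2+q^2)=1$, and the fraction is already reduced: $a_n=hpq$ and $b_n=p^2+q^2$. With $c=2/h$ we get
\[
c\,a_n+b_n=2pq+p^2+q^2=(p+q)^2 .
\]

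\emph{Necessity.} Here only $n=2$ is needed, and $x_0$ can be chosen freely.

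First take $x_0$ coprime to $k$. Then $\gcd(kx_0,x_0^2+k)=\gcd(x_0^2,k)=1$, so $c\,a_2+b_2=f(x_0)$ with $f(x)=x^2+ckx+k$. This must be a perfect $m$-th power for every $x$ in the progression $x\equiv1\pmod k$.
\begin{itemize}
\item If $m\ge3$ this is impossible by counting. The values of $f$ up to $X$ coming from this progression number roughly $\sqrt X/k$, and they are eventually distinct. But there are only about $X^{1/m}$ perfect $m$-th powers up to $X$.
\item So $m=2$. Write $4f(x)=(2x+ck)^2-\Delta$ with $\Delta=c^2k^2-4k$. If $\Delta\ne0$, then $(2x+ck)^2-(2y)^2=\Delta$ has only finitely many solutions once we clear denominators of $c$ (a difference of two squares equal to a fixed nonzero number).
\item Therefore $\Delta=0$, i.e.\ $c=2/\sqrt k$. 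For $c$ to be rational we need $k=h^2$, and then $c=2/h$.
\end{itemize}

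It remains to show that every prime $\ell\mid h$ satisfies $\ell\equiv3\pmod 4$. Suppose instead that some $\ell\mid h$ has $\ell=2$ or $\ell\equiv1\pmod4$. Take $x_0=hs$. Then
\[
x_2=\frac{h^3s}{h^2(s^2+1)},\qquad \gcd\bigl(h^3s,\;h^2(s^2+1)\bigr)=h^2g,\quad g=\gcd(h,s^2+1).
\]
Consequently
\[
c\,a_2+b_2=\frac{(x_0+h)^2}{h^2g}=\frac{(s+1)^2}{g},
\]
so it suffices to choose $s$ with $g=\ell$, which is not a square. By CRT we choose $s$ as follows:
\begin{itemize}
\item modulo $\ell^2$: if $\ell\equiv1\pmod4$, take $s$ with $s^2\equiv-1\pmod\ell$ but $s^2\not\equiv-1\pmod{\ell^2}$ (possible by Hensel, adjusting the lift); if $\ell=2$, take $s$ odd, so that $s^2+1\equiv2\pmod4$;
\item $s\equiv0$ modulo every other prime factor of $h$, so that none of them divides $s^2+1$.
\end{itemize}
Then $g=\ell$, and $(s+1)^2/\ell$ is not an integer square, a contradiction.

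The main obstacle is the step ruling out $m\ge3$ and $\Delta\neq0$ cleanly, since $c$ is only rational. The plan there is to clear denominators and reduce to finiteness of representations of a fixed nonzero integer as a difference of two squares, plus the growth count for $m\ge3$.
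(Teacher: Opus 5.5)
Your proof is correct and follows the paper's argument essentially step by step. Sufficiency uses the same reduced fraction $x_n=hpq/(p^2+q^2)$. Your argument that $m=2$ and $c=2/h$ uses the same test values $x_0\equiv1\pmod k$, with the same counting and difference-of-squares arguments. Necessity uses the same test values $x_0=hs$ with $v_\ell(s^2+1)=1$.

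The only divergence is minor. The paper skips your CRT condition on the other primes of $h$: it just notes that $v_\ell\bigl((s+1)^2/\gcd(h,s^2+1)\bigr)=2v_\ell(s+1)-1$ is odd.
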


The condition on $h$ is Fermat's criterion in disguise: a prime $p$ divides a sum $u^2+v^2$ of two coprime squares only if $p=2$ or $p\equiv1\pmod4$. The link is the following change of variables. Put $S_n=x_0+\dots+x_n$ and, when $k=h^2$, $t_n=S_n/h$. Since $S_{n+1}=S_n+k/S_n$,
\[
t_{n+1}=t_n+\frac1{t_n},\qquad\text{so that}\qquad \frac{u_{n+1}}{v_{n+1}}=\frac{u_n^2+v_n^2}{u_nv_n}\quad\text{if } t_n=\frac{u_n}{v_n},
\]
and the right-hand side is again in lowest terms. All the arithmetic of the problem is then concentrated in one number, $\gcd(h,u_n)$, and we obtain the exact formula (Proposition~\ref{prop:key})
\begin{equation}\label{eq:keyintro}
\frac2h\,a_{n+1}+b_{n+1}=\frac{(u_{n-1}+v_{n-1})^2}{\gcd(h,\ u_{n-1}^2+v_{n-1}^2)}\qquad(n\ge1).
\end{equation}
In the language of arithmetic dynamics, the substitution $S=ht$ conjugates $z\mapsto z+h^2/z$ to $t\mapsto t+1/t$, a rational map with good reduction at every prime \cite{Si07}. Formula~\eqref{eq:keyintro} also describes what happens along an individual sequence when $h$ is arbitrary.

\begin{introthm}\label{thmB}
Let $h\in\N$ and $k=h^2$. Along any sequence of $\cS_k$, the number $\frac2h a_n+b_n$ fails to be a perfect square for at most $\omega_1(h)$ indices $n\ge2$, where $\omega_1(h)$ is the number of distinct prime factors of $h$ that are equal to $2$ or congruent to $1$ modulo $4$. The bound is attained.
\end{introthm}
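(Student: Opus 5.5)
The plan is to derive Theorem~\ref{thmB} from the exact formula \eqref{eq:keyintro} (Proposition~\ref{prop:key}) together with the ``no cancellation'' property of $t\mapsto t+1/t$. Write $t_n=u_n/v_n$ in lowest terms, so that $u_{n+1}=u_n^2+v_n^2$ and $v_{n+1}=u_nv_n$ with $\gcd(u_{n+1},v_{n+1})=1$. For $n\ge1$, formula \eqref{eq:keyintro} reads
\[
\tfrac2h a_{n+1}+b_{n+1}=\frac{(u_{n-1}+v_{n-1})^2}{g_n},\qquad g_n=\gcd(h,u_n).
\]
The left side is an integer, so it is a perfect square if and only if $g_n$ is a perfect square. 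In particular, the index $n+1\ge2$ can fail only if $g_n>1$, that is, only if some prime $p\mid h$ divides $u_n$.

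The key step is a ``one hit per prime'' lemma: each prime $p\mid h$ divides $u_n$ for at most one $n\ge1$, and never when $p\equiv3\pmod 4$. The second part is Fermat's criterion, since $u_n=u_{n-1}^2+v_{n-1}^2$ is a sum of two coprime squares. For the first part, suppose $p\mid u_n$. Then $p\mid v_{n+1}=u_nv_n$, and by induction $p\mid v_m$ for all $m>n$, because $v_{m+1}=u_mv_m$. Coprimality of $u_m$ and $v_m$ then gives $p\nmid u_m$ for every $m>n$.

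Hence every failing index $n+1$ is charged to at least one prime $p\mid h$ that is $2$ or $\equiv1\pmod4$, and no prime is charged twice. This gives at most $\omega_1(h)$ failures. It also proves the ``if'' half of Theorem~\ref{thmA} as the special case $\omega_1(h)=0$.

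For sharpness I would exhibit, for each $h$, a sequence with exactly $\omega_1(h)$ failures. This needs the primes $p_1,\dots,p_r$ counted by $\omega_1(h)$ to hit at \emph{distinct} indices, each with $v_{p}(g_n)$ odd. A failure needs $g_n$ to be a non-square, and several primes hitting at the same index would produce only one failure. I would take $x_0=hy$, so $t_0=y$ and $v_0=1$. Modulo $p$, the orbit of $t_0$ is then the orbit of $y\bmod p$ under $t\mapsto t+1/t$ on $\mathbb{P}^1(\F_p)$, by good reduction. The condition $p\mid u_n$ means exactly that this orbit reaches $0$ at time $n$.

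The value $0$ has the preimages $\pm\sqrt{-1}$, which exist for $p\equiv1\pmod4$, and $1$ when $p=2$. I would try to find, for $p_j$, a backward chain $0\leftarrow\tau_1\leftarrow\cdots\leftarrow\tau_j$ in $\F_{p_j}$, solving $t^2-\tau t+1=0$ at each step, with all $\tau_i\ne0,\infty$. Then I would choose $y$ by the Chinese remainder theorem modulo $\prod p_j^2$ so that $y\equiv\tau_j\pmod{p_j}$, using a Hensel-type adjustment modulo $p_j^2$ to force $v_{p_j}(u_j)=1$. For example, at the last step the derivative $2\sqrt{-1}\ne0$ in odd characteristic, and for $p=2$ one gets $u^2+v^2\equiv2\pmod4$.

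The main obstacle I expect is the existence of these backward chains of prescribed length in $\F_p$, since the needed square roots of $\tau^2-4$ need not exist. If they fail, I would try two fallbacks. The first is to weaken the requirement to ``the forward orbits of a common $y$ reach $0$ at pairwise distinct times for the various $p_j$,'' searching over $y\bmod\prod p_j$. The second is to read ``attained'' as sharpness of the bound in general and give explicit examples, such as $h=5\cdot13$ or $h=2\cdot5$, where a direct computation exhibits $\omega_1(h)$ failures.
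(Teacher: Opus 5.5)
\textbf{Upper bound.} Your upper bound is correct and follows the paper's argument. It combines formula \eqref{eq:key}, Fermat's criterion from Lemma~\ref{lem:nocancel}, and the fact that $p\mid u_n$ forces $p\mid v_m$ for all $m>n$ (Proposition~\ref{prop:orbit}(b)).

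One step along the way is wrong, though. The number $\frac2ha_{n+1}+b_{n+1}$ need not be an integer: for $h=5$, $x_0=10$ it equals $\frac95$. The equivalence ``perfect square iff $g_n$ is a square'' is also false. For example, $h=25$, $x_0=175$ gives $g_1=25$ but the value $\frac{64}{25}$. The correct criterion is Proposition~\ref{prop:orbit}(a): the value is a perfect square iff $g_n=1$. The reason is that an odd prime $p\mid g_n$ cannot divide $u_{n-1}+v_{n-1}$, since it would then divide $(u+v)^2-(u^2+v^2)=2uv$. So the value is not even $p$-integral, and at $p=2$ one automatically has $v_2(g_n)=1$. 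Your bound only uses the direction from $g_n=1$ to a square, so it survives. But every prime dividing $\gcd(h,u_n)$ already causes a failure, so the Hensel adjustment in your construction is unnecessary.

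\textbf{Sharpness.} Here there is a genuine gap. Your main plan and your first fallback both try to show that every $h$ has a sequence with exactly $\omega_1(h)$ failures, and this is false.
\begin{itemize}
\item If $p\mid v_0$, then $p$ never divides any $u_n$. Otherwise $p\mid u_n$ with $n\ge1$ exactly when the orbit of $t_0\bmod p$ under $T$ on $\mathbb{P}^1(\F_p)$ reaches $0$ at time $n$. So the possible times are bounded by the depth of the backward tree of $0$ in $\F_p$.
\item In $\F_2$ that tree is just $1\mapsto0$.
\item In $\F_{13}$ it is $\pm5\mapsto0$ and nothing more, because $t^2\mp5t+1$ has discriminant $21\equiv8$, a non-residue modulo $13$.
\end{itemize}
Hence for $h=26$ both primes can only divide $u_1$. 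Every sequence of $\cS_{676}$ has at most one failure, at $n=2$, as in the paper's example $x_0=130$. So the bound $\omega_1(26)=2$ is never attained, and this kills both your backward-chain approach and the search over $y\bmod\prod p_j$.

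``Attained'' can therefore only mean attained for some $h$, and that is how the paper proves it: by the single example $h=x_0=10$, with failures at $n=2$ (caused by $2$) and $n=3$ (caused by $5$). Your second fallback is the right route, but it is not a proof until you exhibit the sequence. The example $h=x_0=10$ works. So does $h=65$, $x_0=65\cdot31$, where $\gcd(65,u_1)=\gcd(65,962)=13$ and $\gcd(65,u_2)=\gcd(65,926405)=5$.
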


For non-square $k$ no linear expression in $a_n,b_n$ works, but a quadratic one does. Since $b-\frac{2a}{\sqrt k}$ and $b+\frac{2a}{\sqrt k}$ are conjugate in $\Q(\sqrt k)$, the natural invariant is their product.

\begin{introthm}\label{thmC}
Let $k\in\N$. For every sequence of $\cS_k$ and every $n\ge2$, the number $b_n^2-\frac4k a_n^2$ is the square of a rational number. If some $c\in\Q_{>0}$ makes $b_n^2-c\,a_n^2$ a perfect square for all sequences and all $n\ge2$, then $c=4/k$; and $b_n^2-\frac4ka_n^2$ is a perfect square for all sequences and all $n\ge2$ if and only if
\begin{enumerate}[label=\textup{(\roman*)},leftmargin=*]
\item for every odd prime $p$ with $v_p(k)=2\ell\ge2$ even, $-k/p^{2\ell}$ is a quadratic non-residue modulo $p$, and
\item if $v_2(k)=2\ell\ge4$ is even, then $k/2^{2\ell}\equiv1\pmod4$.
\end{enumerate}
\end{introthm}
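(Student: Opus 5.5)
The plan is to track the partial sums $S_n=x_0+\dots+x_n$ exactly, as in the proof of Proposition~\ref{prop:key}, but now without assuming that $k$ is a square. Fix $n\ge2$ and write $S_{n-2}=U/V$ in lowest terms. Then $S_{n-1}=S_{n-2}+k/S_{n-2}=(U^2+kV^2)/(UV)$. Since $\gcd(U,V)=1$, the cancellation in this fraction is exactly $d=\gcd(U^2+kV^2,UV)=\gcd(k,U)$. So $S_{n-1}=P/Q$ with $P=(U^2+kV^2)/d$ and $Q=UV/d$ coprime.

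Next, $x_n=k/S_{n-1}=kQ/P$, and because $\gcd(P,Q)=1$ its reduced form has $a_n=kQ/g$ and $b_n=P/g$ with $g=\gcd(k,P)$. The key identity is
\[
(U^2+kV^2)^2-4kU^2V^2=(U^2-kV^2)^2 ,
\]
which gives
\[
b_n^2-\tfrac4k a_n^2=\frac{P^2-4kQ^2}{g^2}=\Bigl(\frac{U^2-kV^2}{dg}\Bigr)^2 .
\]
This proves the first assertion. It also shows that the quantity is an integer square if and only if $dg\mid U^2-kV^2$. This reduces everything to the local condition $v_p(dg)\le v_p(U^2-kV^2)$ for each prime $p\mid k$.

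For the uniqueness of $c$, I would use the sequences with $x_0=N$ and $\gcd(N,k)=1$. Then $x_2=kN/(N^2+k)$ is already reduced, so the hypothesis says that $F(N)=(N^2+k)^2-ck^2N^2$ is a square for infinitely many integers $N$. A quartic with square leading coefficient that takes square values at infinitely many integers must be the square of a polynomial. To see this, approximate $\sqrt{F(N)}$ by $N^2+\gamma$ with $\gamma=k-\tfrac{ck^2}{2}$; the difference is $O(1/N^2)$, hence eventually zero after clearing the denominator $2$. Comparing $F=(N^2+\beta)^2$ with $F=N^4+(2k-ck^2)N^2+k^2$ forces $\beta=\pm k$. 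The choice $\beta=k$ gives $c=0$, which is excluded, so $\beta=-k$ and $c=4/k$.

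For the criterion I would work one prime $p\mid k$ at a time, with $e=v_p(k)$, $\alpha=v_p(U)$, and $k=p^ek'$. Then $v_p(d)=\min(\alpha,e)$, and $v_p(g)=v_p(\gcd(k,P))$ is computed from $U^2+kV^2$ after dividing by $p^{\min(\alpha,e)}$.

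I would organise the cases by comparing $2\alpha$ with $e$.
\begin{itemize}
\item If $2\alpha<e$, then $v_p(U^2\pm kV^2)=2\alpha$, so $v_p(g)=\alpha$ and $v_p(dg)=2\alpha$. The condition holds automatically.
\item If $2\alpha>e$, then $v_p(d)=\min(\alpha,e)\ge e/2$ and $v_p(P)=e-\min(\alpha,e)$. One checks directly that $v_p(dg)\le e=v_p(U^2-kV^2)$.
\item The only dangerous case is $2\alpha=e$, so $e=2\ell$ is even and $U=p^\ell U'$ with $p\nmid U'$. Here $v_p(dg)$ exceeds $v_p(U^2-kV^2)=2\ell+v_p(U'^2-k'V^2)$ exactly when $p$ divides $U'^2+k'V^2$ more often than it divides $U'^2-k'V^2$.
\end{itemize}
For $p$ odd, both $U'^2\pm k'V^2$ cannot be divisible by $p$, so failure happens exactly when $p\mid U'^2+k'V^2$, which is possible if and only if $-k'$ is a quadratic residue mod $p$. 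This is condition (i).

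For $p=2$ the same comparison is done with $2$-adic valuations of $U'^2\pm k'V^2$, where $U'$ and $V$ are odd. The sum and difference have valuations that differ unless $k'\equiv1\pmod 4$, which gives condition (ii) once the small cases $\ell=1$ are checked directly.

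The main obstacle is the necessity direction: realizability. I must show that whenever (i) or (ii) fails, some sequence actually produces a pair $(U,V)=(\text{numerator},\text{denominator of }S_{n-2})$ with $U=p^\ell U'$ and $U'^2+k'V^2\equiv0$ modulo $p$ (or the appropriate power of $2$). I would first try $n=2$, so that $U=x_0=N$ and $V=1$. Then I need only an integer $N=p^\ell U'$ with $U'^2\equiv-k'\pmod p$, which exists by the quadratic-residue hypothesis. I must also make sure that the other primes dividing $k$ do not interfere; this is harmless because failure at one prime already makes $(U^2-kV^2)/(dg)$ a non-integer. For $p=2$, an analogous choice of odd $U'$ should handle the failure of (ii).
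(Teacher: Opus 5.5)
Your proposal is correct and follows essentially the paper's proof. It uses the same identity $(X^2+kY^2)^2-4kX^2Y^2=(X^2-kY^2)^2$, the same difference-of-squares argument forcing $c=4/k$, the same prime-by-prime comparison of $2v_p$ with $v_p(k)$, and the same witnesses $x_0=p^\ell w$ and $x_0=2^\ell$ at $n=2$. Parametrizing by $S_{n-2}=U/V$ instead of by the previous term $(A,B)$ only splits the paper's single gcd $g$ into your product $dg$; the paper also shortens the comparison via $g\mid A^2-kB^2\iff g\mid 2k$.

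One intermediate claim needs fixing. In the critical case $2\alpha=e=2\ell$, write $\tau_\pm=v_p(U'^2\pm k'V^2)$. Then $v_p(dg)=2\ell+\min(\ell,\tau_+)$, not $2\ell+\tau_+$, because $g=\gcd(k,P)$ caps $v_p(g)$ at $2\ell$. So failure means $\min(\ell,\tau_+)>\tau_-$, not ``$p$ divides $U'^2+k'V^2$ more often than $U'^2-k'V^2$.''

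At $p=2$ the two valuations always differ, contrary to your sentence. If $k'\equiv1\pmod4$ then $\tau_+=1<\tau_-$, and if $k'\equiv3\pmod4$ then $\tau_+\ge2>\tau_-=1$. It is exactly the cap that rescues $\ell=1$: for $k=12$ and $x_0=2$ one gets $v_2(dg)=v_2(U^2-kV^2)=3$, even though $\tau_+>\tau_-$. Your ``checked directly'' should make this explicit.

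A harmless slip: the denominator of $\gamma=k-ck^2/2$ is that of $ck^2/2$, not necessarily $2$; the argument works with any fixed denominator.
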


Writing $k=mh^2$ with $m$ squarefree, condition (i) says that every odd prime dividing $h$ but not $m$ is inert in $\Q(\sqrt{-m})$, so Theorem~\ref{thmC} reaches every imaginary quadratic field. The smallest $k$ that fail are $18,25,45,48,72,99,100,112$.

Finally, perturbing the recursion changes the quadratic field. For $\mu\in\Z$ with $\mu\ge-1$ consider
\begin{equation}\label{eq:murec}
x_{n+1}=\frac{h^2}{x_0+x_1+\dots+x_n+n\mu h}\qquad(n\ge0),
\end{equation}
so that $\mu=0$ is Problem~\ref{prob:main} with $k=h^2$.

\begin{introthm}\label{thmD}
Let $\mu\ge-1$ with $\mu\ne2$, and suppose that $\gcd(h,\mu-2)=1$. For the sequences \eqref{eq:murec}, the number $b_n+\frac{2-\mu}{h}a_n$ is a perfect square for every $x_0\in\N$ and every $n\ge2$ if and only if $X^2+\mu X+1$ is irreducible modulo every prime factor of $h$. The general criterion, which includes the primes dividing $\mu-2$, is Theorem~\ref{thm:mu}. In particular, for $x_{n+1}=h^2/(x_0+\dots+x_n+nh)$ the number $b_n+\frac1h a_n$ is always a perfect square if and only if every prime factor of $h$ is congruent to $2$ modulo $3$.
\end{introthm}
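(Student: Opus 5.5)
Set $R_n=x_0+\dots+x_n+n\mu h$ and $t_n=R_n/h$. I would first derive the analogue of the change of variables from the introduction. From \eqref{eq:murec} we get $R_{n+1}=R_n+h^2/R_n+\mu h$, hence
\[
t_{n+1}=\varphi(t_n),\qquad \varphi(t)=t+\mu+\frac1t=\frac{t^2+\mu t+1}{t}.
\]
Writing $t_n=u_n/v_n$ in lowest terms gives $u_{n+1}=Q(u_n,v_n)$ and $v_{n+1}=u_nv_n$, where $Q(X,Y)=X^2+\mu XY+Y^2$. This representation is again reduced, because $\gcd(Q(u,v),u)=\gcd(v^2,u)=1$, and similarly with $v$.

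Next I would prove the analogue of Proposition~\ref{prop:key}. Since $x_{n+1}=h^2/R_n=hv_n/u_n$, we have $a_{n+1}=hv_n/g_n$ and $b_{n+1}=u_n/g_n$ with $g_n=\gcd(h,u_n)$. Substituting $u_n=Q(u_{n-1},v_{n-1})$ and $v_n=u_{n-1}v_{n-1}$ gives
\[
b_{n+1}+\frac{2-\mu}{h}\,a_{n+1}=\frac{u_n+(2-\mu)v_n}{g_n}=\frac{(u_{n-1}+v_{n-1})^2}{\gcd\bigl(h,\,Q(u_{n-1},v_{n-1})\bigr)} .
\]
This is an integer. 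It is a perfect square exactly when the gcd in the denominator is a perfect square.

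\emph{Sufficiency.} Suppose $X^2+\mu X+1$ is irreducible modulo every prime $p\mid h$. If $p\mid Q(u,v)$ with $\gcd(u,v)=1$, then $p\nmid v$, since otherwise $p\mid u^2$ as well. Then $u/v \bmod p$ would be a root of $X^2+\mu X+1$, which is impossible. Hence the gcd is $1$, and the expression is a square for every $n\ge2$ and every $x_0$. The hypothesis $\gcd(h,\mu-2)=1$ is not needed for this direction.

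\emph{Necessity.} This is where I expect the real work. Suppose $p\mid h$, $p\nmid\mu-2$, and $r\in\F_p$ is a root of $X^2+\mu X+1$. I must choose $x_0\in\N$ and an index $n\ge1$ such that $t_n\equiv r\pmod p$ and $v_p(Q(u_n,v_n))$ is odd and less than $v_p(h)$; the simplest target is exactly $1$. That makes the gcd non-square. Since $x_0$ ranges over $\N$, $t_0=x_0/h$ can be any positive rational with denominator dividing $h$. Taking $x_0$ with $v_p(x_0)=v_p(h)$ lets me prescribe $t_0 \bmod p$ (and modulo $p^2$) arbitrarily.

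One caution: if $p\mid v_m$ for some $m$, then $p\mid v_n$ for all later $n$, and $Q(u_n,v_n)\equiv u_n^2\not\equiv0$. So the orbit must stay $p$-integral. In $\mathbb{P}^1(\F_p)$ the dynamics are $r\mapsto0\mapsto\infty\mapsto\infty$, so I need a $p$-adic unit $t_0$ whose orbit under $\varphi$ reaches $r$ at some step $n\ge1$. For $n=1$ this means solving $t^2+(\mu-r)t+1\equiv0$, i.e. $t+1/t\equiv 2r+r^{-1}$. I would first test whether the discriminant $(2r+r^{-1})^2-4$ can be shown to be a square using $p\nmid\mu-2$. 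Note that $\mu-2=\varphi(-1)$ and $\mu+2=\varphi(1)$, and the excluded primes are exactly where the two roots meet $\pm1$ and the roots collide. If the discriminant is not a square, I would pass to longer backward chains $r\leftarrow s_1\leftarrow s_2\leftarrow\cdots$, or use the conjugate root $r^{-1}$.

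Once a suitable unit $t_0$ is found modulo $p$, I would adjust $t_0$ modulo $p^2$ to force $v_p(Q)=1$; this should work because $r$ is a simple root when $p\nmid\mu^2-4$. Finally I would choose $x_0$ by the Chinese remainder theorem to satisfy these conditions at $p$, keep $p$-integrality, and avoid the other primes dividing $h$. The special case $\mu=1$ then follows from $X^2+X+1$ being irreducible mod $p$ exactly when $p\equiv2\pmod 3$ (with $p=3$ excluded, since $3\mid\mu-2$, and $p=2$ included).
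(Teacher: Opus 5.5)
Your change of variables, the identity for $b_{n+1}+\frac{2-\mu}{h}a_{n+1}$ and the sufficiency argument are correct and match the paper's. The necessity direction has a genuine gap, with two sources.

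The first is an index slip. The term $b_2+\frac{2-\mu}{h}a_2$ has denominator $\gcd(h,Q(u_0,v_0))$, so it is enough to have $t_0\equiv r\pmod p$. You yourself note that this can be arranged through the choice of $x_0$. By requiring the orbit to reach $r$ at a step $n\ge1$, you only examine terms of index $\ge3$, and the backward-orbit problem you leave open there can be unsolvable.

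Take $\mu=0$ and $h=13$, which is admissible since $\gcd(13,-2)=1$. Neither root $\pm5$ of $X^2+1$ has a preimage under $t\mapsto t+1/t$ in $\F_{13}$: both discriminants $21$ and $60$ are $\equiv 8$, a non-residue. Moreover, once $13$ divides some $v_m$ it divides all later ones. So for $h=13$ every failure occurs at $n=2$; for instance $x_0=65$ gives $\frac{36}{13}$. The same happens for $\mu=1$, $h=3$, where $x_0=3$ gives $b_2+\frac13a_2=\frac43$. Incidentally, for $\mu=1$ the prime $3$ divides $\mu+2$, not $\mu-2=-1$. It is excluded because $X^2+X+1\equiv(X-1)^2\pmod 3$, so necessity must be proved at $p=3$ rather than set aside.

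The second source is your claim that the right-hand side is an integer, and is a perfect square exactly when the gcd is. This is false, and it leads you to the target ``$v_p(Q)$ odd'', which may be unattainable. For $\mu=7$, $h=9$ one has $Q(u,v)=(u-v)^2+9uv$, so $v_3(Q)\in\{0,2\}$ for coprime $u,v$ and the gcd is always $1$ or $9$. Nevertheless $x_0=9$ gives $b_2-\frac59a_2=\frac49$.

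The missing idea is the identity $Q(u,v)=(u+v)^2+(\mu-2)uv$. If $p\mid Q(u,v)$ and $p\nmid\mu-2$, then $p\nmid u+v$: otherwise $p\mid uv$, and together with $p\mid u+v$ this gives $p\mid u$ and $p\mid v$. Hence any prime $p\mid\gcd(h,Q)$ gives the number negative $p$-adic valuation, so it is not even an integer. This is exactly where the hypothesis $\gcd(h,\mu-2)=1$ is used.

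To finish, take $x_0=he$ with $e\in\N$ and $e\equiv r\pmod p$. Then $(u_0,v_0)=(e,1)$ and $b_2+\frac{2-\mu}{h}a_2=(e+1)^2/\gcd(h,e^2+\mu e+1)\notin\Z$, which is the paper's argument. No lifting modulo $p^2$ is needed, and no simple-root assumption, which in any case fails when $p\mid\mu+2$.
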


For $p\nmid\mu^2-4$ odd, irreducibility of $X^2+\mu X+1$ modulo $p$ means that $p$ is inert in $\Q(\sqrt{\mu^2-4})$. As $\mu$ runs through the integers $\ge3$, these fields run through all real quadratic fields (Remark~\ref{rem:fields}).

\subsection*{Related work}
The asymptotic behaviour of sequences such as $a_{n+1}=a_n+1/a_n$ is a classical exercise \cite{Newman}; the orbit $1,2,\frac52,\frac{29}{10},\dots$ is recorded in \cite{OEIS} (A073833, A073834). The arithmetic of the map $z\mapsto z+k/z$, including the Galois groups of its iterates and the distribution of the primes dividing its orbits, is studied in the companion paper \cite{Mat-dyn}, which builds on the work of Jones and Manes \cite{JM14}. The retriangular problem \cite{Mat-retri}, which inspired Problem~\ref{prob:main}, leads to a different circle of ideas (affine maps between polygonal numbers), but its classification of the affine self-maps of sums of two triangular numbers rests on the same local obstruction at primes $\equiv3\pmod4$.

\subsection*{Notation} $\N=\{1,2,3,\dots\}$; $v_p$ is the $p$-adic valuation; a \emph{perfect square} is the square of an integer. For a sequence of $\cS_k$ we write $S_n=x_0+\dots+x_n$.

%======================================================================
\section{Preliminaries}\label{sec:prelim}

Since $S_{n+1}=S_n+x_{n+1}=S_n+k/S_n$, we have
\begin{equation}\label{eq:f}
S_n=f^{\,n}(x_0),\qquad f(z)=z+\frac kz.
\end{equation}
For $n\ge1$, $S_{n-1}=k/x_n$, so $x_{n+1}=\psi(x_n)$ with $\psi(x)=kx/(x^2+k)$. Because $\psi(k/x)=\psi(x)$ and $x_1=k/x_0$, also $x_2=\psi(x_0)$. Thus, if $(A,B)=(a_n,b_n)$ for some $n\ge1$, or $(A,B)=(x_0,1)$, the next term is
\begin{equation}\label{eq:step}
\frac{kAB}{A^2+kB^2}\quad\text{before reduction, and we put}\quad g=\gcd(kAB,\,A^2+kB^2).
\end{equation}

\begin{lemma}\label{lem:gprimes}
Every prime divisor of $g$ divides $k$ and $A$ and does not divide $B$.
\end{lemma}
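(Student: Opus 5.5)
Let $p$ be a prime dividing $g=\gcd(kAB,\,A^2+kB^2)$. The one fact to use throughout is that $\gcd(A,B)=1$. This holds by definition when $(A,B)=(a_n,b_n)$ is a reduced fraction, and trivially when $(A,B)=(x_0,1)$. The argument is elementary: we split according to which factor of $kAB$ the prime $p$ divides, and use the second entry $A^2+kB^2$ to propagate divisibility.

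First we show $p\nmid B$. Suppose $p\mid B$. Then $p\mid kB^2$, and since $p\mid A^2+kB^2$, also $p\mid A^2$, hence $p\mid A$. This contradicts $\gcd(A,B)=1$.

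Next, since $p\mid kAB$ and $p\nmid B$, we have $p\mid k$ or $p\mid A$. We show each of these forces the other.
\begin{itemize}
\item If $p\mid k$, then $p\mid kB^2$. Hence $p\mid A^2=(A^2+kB^2)-kB^2$, so $p\mid A$.
\item If $p\mid A$, then $p\mid A^2$. Hence $p\mid kB^2$, and since $p\nmid B$ this gives $p\mid k$.
\end{itemize}
So $p$ divides both $k$ and $A$, and not $B$, as claimed.

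There is no real obstacle. The only point needing care is that the coprimality $\gcd(A,B)=1$ is available in both allowed cases for $(A,B)$, and this is immediate.
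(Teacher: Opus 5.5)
Your proof is correct and is essentially the paper's own argument: the same elementary divisibility chase resting on $\gcd(A,B)=1$. The only difference is the order. You rule out $p\mid B$ first and then show that $p\mid k$ and $p\mid A$ force each other, whereas the paper first derives $p\mid k$ by contradiction and then deduces $p\mid A$ and $p\nmid B$.
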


\begin{proof}
If $p\mid g$ and $p\nmid k$, then $p\mid AB$; if $p\mid A$ then $p\mid kB^2$, so $p\mid B$, and if $p\mid B$ then $p\mid A^2$, contradicting $\gcd(A,B)=1$. Hence $p\mid k$, so $p\mid A^2$, and $p\nmid B$.
\end{proof}

Now let $k=h^2$ and $t_n=S_n/h$. By \eqref{eq:f}, $t_{n+1}=T(t_n)$ with $T(t)=t+1/t$. Write $t_n=u_n/v_n$ in lowest terms, with $u_n,v_n>0$; initially $u_0=x_0/\gcd(x_0,h)$ and $v_0=h/\gcd(x_0,h)$.

\begin{lemma}\label{lem:nocancel}
For all $n\ge0$, $u_{n+1}=u_n^2+v_n^2$ and $v_{n+1}=u_nv_n$, and these are coprime. The integers $u_n$ are pairwise coprime, and every odd prime factor of $u_n$ with $n\ge1$ is congruent to $1$ modulo $4$.
\end{lemma}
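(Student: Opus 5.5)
The plan is to prove the claims in the order stated. The recursion comes first, then coprimality, and the last two claims both come out of the recursion.

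\emph{The recursion.} By the formula $t_{n+1}=T(t_n)$ with $T(t)=t+1/t$ and $t_n=u_n/v_n$, we get $t_{n+1}=(u_n^2+v_n^2)/(u_nv_n)$. To see that this is already in lowest terms, let $p$ be a prime dividing both $u_n^2+v_n^2$ and $u_nv_n$. Then $p$ divides $u_n$ or $v_n$. If $p\mid u_n$, then $p\mid v_n^2$ and hence $p\mid v_n$. The case $p\mid v_n$ is symmetric. Either way this contradicts $\gcd(u_n,v_n)=1$. Since both quantities are positive, uniqueness of reduced representations gives $u_{n+1}=u_n^2+v_n^2$ and $v_{n+1}=u_nv_n$.

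\emph{Pairwise coprimality.} Unwinding $v_{n+1}=u_nv_n$ gives $v_n=v_0u_0u_1\cdots u_{n-1}$. Take $m<n$. Then $u_m$ divides $v_n$, and $\gcd(u_n,v_n)=1$, so $\gcd(u_m,u_n)=1$.

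\emph{Prime factors of $u_n$.} Let $n\ge1$ and let $p$ be an odd prime dividing $u_n=u_{n-1}^2+v_{n-1}^2$. Because $\gcd(u_{n-1},v_{n-1})=1$, the prime $p$ divides neither $u_{n-1}$ nor $v_{n-1}$. Hence $(u_{n-1}v_{n-1}^{-1})^2\equiv-1\pmod p$, so $-1$ is a quadratic residue modulo $p$, and therefore $p\equiv1\pmod4$.

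There is no real obstacle here. The only point needing care is that coprimality of $u_{n+1}$ and $v_{n+1}$ is exactly what justifies identifying them with the reduced numerator and denominator. That is why the recursion and the coprimality assertion are proved together in the first step.
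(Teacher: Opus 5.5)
Your proof is correct. For the recursion, the coprimality of $u_{n+1}$ and $v_{n+1}$, and the claim about odd prime factors, it is the paper's argument.

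The one real difference is pairwise coprimality. You telescope $v_{n+1}=u_nv_n$ to $v_n=v_0u_0u_1\cdots u_{n-1}$. Then for $m<n$, $\gcd(u_m,u_n)=1$ follows at once from $u_m\mid v_n$ and $\gcd(u_n,v_n)=1$. The paper instead fixes the smaller index $n$ and notes $v_{n+1}\equiv0\pmod{u_n}$. It then iterates the recursion modulo $u_n$ to get $u_m\equiv v_n^{2^{m-n}}\pmod{u_n}$ for every $m>n$, which is a unit modulo $u_n$.

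Your route is shorter, since it reuses the first claim at the larger index. It also makes explicit the Euclid-type structure mentioned in Remark~\ref{rem:finite}: each $u_n$ is prime to the product of all earlier ones. The paper's route needs coprimality only at the smaller index and gives the explicit residue of $u_m$ modulo $u_n$. Carried out modulo a single prime, it is the same computation that proves Proposition~\ref{prop:orbit}(b). Your divisibility $u_n\mid v_m$ for $m>n$ yields Proposition~\ref{prop:orbit}(b) just as directly.
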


\begin{proof}
A prime dividing $u_nv_n$ divides exactly one of $u_n,v_n$, hence not $u_n^2+v_n^2$. For $m>n$ we have $v_{n+1}\equiv0$ and therefore $u_m\equiv u_{n+1}^{2^{m-n-1}}\equiv v_n^{2^{m-n}}\pmod{u_n}$, which is prime to $u_n$. Finally, if $p$ is odd and $p\mid u^2+v^2$ with $\gcd(u,v)=1$, then $p\nmid v$ and $-1\equiv(uv^{-1})^2\pmod p$, so $p\equiv1\pmod4$.
\end{proof}

\begin{proposition}\label{prop:key}
Let $k=h^2$ and $d_n=\gcd(h,u_n)$. For all $n\ge0$,
\[
a_{n+1}=\frac{hv_n}{d_n},\qquad b_{n+1}=\frac{u_n}{d_n},
\]
and for all $n\ge1$,
\begin{equation}\label{eq:key}
\frac2h\,a_{n+1}+b_{n+1}=\frac{(u_{n-1}+v_{n-1})^2}{d_n},\qquad b_{n+1}-\frac2h\,a_{n+1}=\frac{(u_{n-1}-v_{n-1})^2}{d_n}.
\end{equation}
\end{proposition}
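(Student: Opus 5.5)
We first compute $x_{n+1}$ directly from the normalized sums, and then substitute the recursion of Lemma~\ref{lem:nocancel} into the resulting formulas. For $n\ge0$ we have $x_{n+1}=h^2/S_n=h/t_n=hv_n/u_n$. Since $\gcd(u_n,v_n)=1$, any common factor of $hv_n$ and $u_n$ must divide $h$. So $\gcd(hv_n,u_n)=\gcd(h,u_n)=d_n$, and in lowest terms $a_{n+1}=hv_n/d_n$ and $b_{n+1}=u_n/d_n$. This is the first assertion. It holds for $n=0$ as well, because $t_0=u_0/v_0$ is reduced by the choice of $u_0,v_0$.

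For $n\ge1$ we use $u_n=u_{n-1}^2+v_{n-1}^2$ and $v_n=u_{n-1}v_{n-1}$ from Lemma~\ref{lem:nocancel}. Then
\[
\frac2h\,a_{n+1}+b_{n+1}=\frac{2v_n+u_n}{d_n}=\frac{u_{n-1}^2+2u_{n-1}v_{n-1}+v_{n-1}^2}{d_n}=\frac{(u_{n-1}+v_{n-1})^2}{d_n}.
\]
The difference $b_{n+1}-\frac2h\,a_{n+1}$ is handled the same way and gives $(u_{n-1}-v_{n-1})^2/d_n$. Finally, $d_n=\gcd(h,u_n)=\gcd(h,u_{n-1}^2+v_{n-1}^2)$, which matches the form stated in \eqref{eq:keyintro}.

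Every step is a direct substitution. The only point needing care is the reduction $\gcd(hv_n,u_n)=\gcd(h,u_n)$, which rests on the coprimality of $u_n$ and $v_n$ from Lemma~\ref{lem:nocancel}.
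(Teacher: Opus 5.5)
Your proposal is correct and follows essentially the same route as the paper. You write $x_{n+1}=h^2/S_n=hv_n/u_n$, reduce via $\gcd(hv_n,u_n)=\gcd(h,u_n)$ using $\gcd(u_n,v_n)=1$, and then expand $u_n\pm2v_n=(u_{n-1}\pm v_{n-1})^2$ using the no-cancellation recursion of Lemma~\ref{lem:nocancel}.
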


\begin{proof}
We have $x_{n+1}=h^2/S_n=h/t_n=hv_n/u_n$, and $\gcd(hv_n,u_n)=\gcd(h,u_n)$. Moreover $u_n\pm2v_n=u_{n-1}^2\pm2u_{n-1}v_{n-1}+v_{n-1}^2$.
\end{proof}

\begin{example}\label{ex:k9}
For $h=3$ and $x_0=1$ we have $t_0=\frac13$, $t_1=\frac{10}{3}$, $t_2=\frac{109}{30}$, $t_3=\frac{12781}{3270}$, and the sequence is $1,9,\frac9{10},\frac{90}{109},\frac{9810}{12781},\dots$. Here $\frac23\cdot9+10=16=(1+3)^2$, $\frac23\cdot90+109=169=(10+3)^2$ and $\frac23\cdot 9810+12781=19321=139^2=(109+30)^2$.
\end{example}

%======================================================================
\section{Proof of Theorem~\ref{thmA}}\label{sec:main}

\emph{Step 1: $m=2$, $k$ is a square, and $c=2/\sqrt k$.} Take $x_0=kr+1$ with $r\in\Z_{\ge0}$. Since $\gcd(x_0,k)=1$, Lemma~\ref{lem:gprimes} shows that $x_2=kx_0/(x_0^2+k)$ is reduced, so
\[
F(r):=c\,a_2+b_2=c\,kx_0+x_0^2+k=\Bigl(x_0+\frac{ck}{2}\Bigr)^2+E,\qquad E=k-\frac{c^2k^2}{4},
\]
is a perfect $m$-th power for every $r\ge0$. The function $F$ is strictly increasing and $F(r)\le\Lambda(r+1)^2$ for a constant $\Lambda$. Hence $F(0),\dots,F(N)$ are $N+1$ distinct $m$-th powers in $[1,\Lambda(N+1)^2]$, so $N+1\le(\Lambda(N+1)^2)^{1/m}$, which fails for large $N$ if $m\ge3$. Thus $m=2$; write $F(r)=s_r^2$ with $s_r\in\N$ and $L_r=x_0+ck/2$. Fix $D\in\N$ with $DL_r\in\Z$ for all $r$. Then $(s_r-L_r)(s_r+L_r)=E$ and $s_r-L_r\in\frac1D\Z$. If $E\ne0$ then $\lvert s_r-L_r\rvert\ge1/D$, so $\lvert E\rvert\ge(s_r+L_r)/D\to\infty$, which is absurd. Therefore $E=0$, i.e.\ $c=2/\sqrt k$, and since $c\in\Q$, $k=h^2$ and $c=2/h$.

\emph{Step 2: sufficiency.} Suppose that every prime factor of $h$ is $\equiv3\pmod4$; in particular $h$ is odd. For $n\ge1$, Lemma~\ref{lem:nocancel} shows that $u_n$ has no prime factor $\equiv3\pmod 4$, so $d_n=1$, and \eqref{eq:key} gives $\frac2ha_{n+1}+b_{n+1}=(u_{n-1}+v_{n-1})^2$.

\emph{Step 3: necessity.} Let $p\mid h$ with $p=2$ or $p\equiv1\pmod4$. Choose $e\in\N$ with $v_p(e^2+1)=1$: take $e=1$ if $p=2$; otherwise take $e^2\equiv-1\pmod p$, replacing $e$ by $e+p$ if $p^2\mid e^2+1$. For $x_0=he$ we have $(u_0,v_0)=(e,1)$ and $u_1=e^2+1$, so by \eqref{eq:key}
\[
\frac2ha_2+b_2=\frac{(e+1)^2}{\gcd(h,e^2+1)},
\]
whose $p$-adic valuation $2v_p(e+1)-1$ is odd. So it is not a perfect square. \qed

\begin{example}
For $h=5$ and $x_0=10$ one gets $x_2=2$ and $\frac25\cdot2+1=\frac95$. For $h=26$ and $x_0=130$ one gets $x_2=5$ and $\frac2{26}\cdot5+1=\frac{36}{26}$; here both $2$ and $13$ cancel.
\end{example}

\begin{remark}[a proof in the original coordinates]\label{rem:gcd}
The students' original proof worked with the pairs $(a_n,b_n)$ and distinguished three cases according to how $v_p(a_n)$ compares with $v_p(h)$. It can be condensed into one statement: if every prime factor of $h$ is $\equiv3\pmod4$ and $\gcd(A,B)=1$, then
\[
\gcd(h^2AB,\ A^2+h^2B^2)=\gcd(h,A)^2,\qquad\text{hence}\qquad \frac2h\,a_{n+1}+b_{n+1}=\Bigl(\frac{a_n+hb_n}{\gcd(h,a_n)}\Bigr)^2.
\]
Indeed, write $\gamma=\gcd(h,A)$, $h=\gamma h_1$, $A=\gamma A_1$. The gcd equals $\gamma^2\gcd(\gamma h_1^2A_1B,\ A_1^2+h_1^2B^2)$, and a prime $q$ dividing the last gcd cannot divide $A_1$, $h_1$ or $B$; so $q\mid\gamma\mid h$ and $-1\equiv(A_1/h_1B)^2\pmod q$, which is impossible. The three cases correspond to the $p$-adic dynamics of $\psi$: for $\kappa=v_p(k)$ and $v=v_p(x)$ with $2v\ne\kappa$ one has $v_p(\psi(x))=\max(v,\kappa-v)$, while on the ``critical circle'' $2v=\kappa$, writing $x=p^{\kappa/2}\xi$, the map reduces modulo $p$ to $\bar\xi\mapsto\bar k'\bar\xi/(\bar\xi^{\,2}+\bar k')$ with $k'=k/p^\kappa$, whose poles are $\F_p$-rational exactly when $\legendre{-k'}{p}=1$.
\end{remark}

%======================================================================
\section{Individual sequences: proof of Theorem~\ref{thmB}}\label{sec:orbit}

\begin{proposition}\label{prop:orbit}
Let $h\in\N$, $k=h^2$ and $n\ge1$.
\begin{enumerate}[label=\textup{(\alph*)}]
\item $\frac2ha_{n+1}+b_{n+1}$ is a perfect square if and only if $\gcd(h,u_n)=1$.
\item Each prime $p$ divides $u_n$ for at most one $n\ge1$.
\end{enumerate}
\end{proposition}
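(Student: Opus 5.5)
For part (a) I will start from the exact formula \eqref{eq:key} of Proposition~\ref{prop:key}:
\[
\tfrac2h a_{n+1}+b_{n+1}=\frac{(u_{n-1}+v_{n-1})^2}{d_n},\qquad d_n=\gcd(h,u_n).
\]
If $d_n=1$ this is visibly a perfect square. For the converse, suppose $d_n>1$ and pick a prime $p\mid d_n$. The aim is to show that $p\nmid u_{n-1}+v_{n-1}$. Then the $p$-adic valuation of the right-hand side equals $-v_p(d_n)<0$, so the number is not even an integer, let alone a perfect square.

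The coprimality step uses Lemma~\ref{lem:nocancel}, which gives $p\mid u_n=u_{n-1}^2+v_{n-1}^2$. Suppose also $p\mid u_{n-1}+v_{n-1}$. Then
\[
u_{n-1}^2+v_{n-1}^2\equiv 2u_{n-1}^2\equiv 2v_{n-1}^2\pmod p.
\]
If $p$ is odd, this forces $p\mid u_{n-1}$ and $p\mid v_{n-1}$, contradicting $\gcd(u_{n-1},v_{n-1})=1$. If $p=2$, coprimality means $u_{n-1},v_{n-1}$ are not both even; since their sum is even, both are odd. Then $u_{n-1}^2+v_{n-1}^2\equiv2\pmod 4$, so $v_2(u_n)=1$ and $v_2(d_n)=1$. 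Because $(u_{n-1}+v_{n-1})^2$ is an even square, its $2$-adic valuation is even and at least $2$. Dividing by $d_n$ therefore leaves a $2$-adic valuation that is odd, and the number is not a square. In every case $d_n>1$ rules out a perfect square.

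Part (b) is immediate from Lemma~\ref{lem:nocancel}: the $u_n$ are pairwise coprime, so no prime can divide two of them. I expect the only delicate point in the whole proposition to be the case $p=2$ in (a), which the parity argument above handles.
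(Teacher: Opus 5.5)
Your proof is correct and follows the paper's route for (a): formula \eqref{eq:key}, a negative $p$-adic valuation when an odd $p\mid d_n$ (since such a $p$ cannot divide $u_{n-1}+v_{n-1}$), and an odd $2$-adic valuation when $p=2$, because then $v_2(d_n)=1$. For (b) you cite the pairwise coprimality already stated in Lemma~\ref{lem:nocancel}, whereas the paper re-derives it by showing that once $p\mid u_n$ one has $p\mid v_m$ and $p\nmid u_m$ for all $m>n$; this is the same congruence mechanism, so either version is fine.
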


\begin{proof}
(a) Let $p\mid d_n$ and $(u,v)=(u_{n-1},v_{n-1})$. If $p$ is odd and $p\mid u+v$, then $p\mid(u+v)^2-(u^2+v^2)=2uv$, which is impossible; so $v_p\bigl((u+v)^2/d_n\bigr)=-v_p(d_n)<0$. If $p=2$, then $u,v$ are odd, $v_2(u^2+v^2)=1$ and $v_2(d_n)=1$, so the valuation is odd. Conversely, $d_n=1$ gives a square by \eqref{eq:key}.

(b) If $p\mid u_n$ then $p\nmid v_n$, so $u_{n+1}\equiv v_n^2\not\equiv0$ and $v_{n+1}\equiv0\pmod p$; inductively $p\mid v_m$ and $p\nmid u_m$ for all $m>n$.
\end{proof}

\begin{proof}[Proof of Theorem~\ref{thmB}]
By Proposition~\ref{prop:orbit}, each failure at an index $n+1\ge2$ is caused by a prime $p\mid\gcd(h,u_n)$; by Lemma~\ref{lem:nocancel} such a prime is $2$ or $\equiv1\pmod4$, and by Proposition~\ref{prop:orbit}(b) it causes at most one failure. For $h=x_0=10$ the sequence is $10,10,5,4,\frac{100}{29},\frac{2900}{941},\dots$ and $\frac15a_n+b_n$ takes the values $2,\frac95,49,39^2,\dots$ for $n=2,3,4,5$: there are failures at $n=2$ (caused by $2$) and $n=3$ (caused by $5$), and none afterwards.
\end{proof}

\begin{remark}\label{rem:finite}
Proposition~\ref{prop:orbit} turns the question ``which terms of a given sequence fail?'' into a question about the dynamics of $T(t)=t+1/t$ over finite fields: the prime $p\mid h$ causes a failure if and only if the orbit of $x_0/h$ modulo $p$ passes through $0$ at some step $n\ge1$. For $p\equiv3\pmod4$ this never happens, because $0$ has no preimage under $T$ in $\F_p$. The same pairwise coprimality shows that the denominators $b_n$ of any sequence of $\cS_{h^2}$ involve infinitely many primes $\equiv1\pmod4$, in the manner of Euclid's proof; that these primes nevertheless have density zero is proved in \cite{Mat-dyn}.
\end{remark}

%======================================================================
\section{Arbitrary $k$: proof of Theorem~\ref{thmC}}\label{sec:generalk}

Let $(A,B)$ and $g$ be as in \eqref{eq:step}, so that the next term is $a/b$ with $a=kAB/g$ and $b=(A^2+kB^2)/g$.

\emph{Rational squares.} The identity $(A^2+kB^2)^2-4k(AB)^2=(A^2-kB^2)^2$ gives
\begin{equation}\label{eq:quadinv}
b^2-\frac4k\,a^2=\Bigl(\frac{A^2-kB^2}{g}\Bigr)^2.
\end{equation}

\emph{Uniqueness of $c$.} With $x_0=X=kr+1$ as in Step~1 of Section~\ref{sec:main}, $b_2^2-c\,a_2^2=(X^2+k)^2-ck^2X^2=(X^2+\lambda)^2-(\lambda^2-k^2)$ with $\lambda=k-\frac{ck^2}{2}$. This must be a perfect square for all $r$, and the difference-of-squares argument forces $\lambda^2=k^2$, so $c\in\{0,4/k\}$.

\emph{The criterion.} By \eqref{eq:quadinv}, $b^2-\frac4ka^2$ is a perfect square if and only if $g\mid A^2-kB^2$. As $g\mid A^2+kB^2$, this is equivalent to $g\mid2kB^2$, and by Lemma~\ref{lem:gprimes} to $g\mid2k$. Let $p\mid g$, $\kappa=v_p(k)$, $\beta=v_p(A)\ge1$, $A=p^\beta A'$ and $k=p^\kappa k'$. Then $v_p(g)=\min(\kappa+\beta,\ v_p(A^2+kB^2))$, and
\[
v_p(g)=\begin{cases}2\beta&\text{if }2\beta<\kappa,\\ \kappa&\text{if }2\beta>\kappa,\\ \kappa+\min(\beta,\tau)&\text{if }2\beta=\kappa,\end{cases}\qquad \tau=v_p(A'^2+k'B^2).
\]
So $v_p(g)\le v_p(2k)$ can fail only if $2\beta=\kappa$ and either $p$ is odd and $\tau\ge1$, or $p=2$, $\beta\ge2$ and $\tau\ge2$. For odd $p$, $\tau\ge1$ means $-k'\equiv(A'/B)^2\pmod p$, which (i) excludes. For $p=2$, $A'$ and $B$ are odd, so $A'^2+k'B^2\equiv1+k'\pmod 8$, and $\tau\ge2$ means $k'\equiv3\pmod 4$, which (ii) excludes. This proves sufficiency. Conversely, if (i) fails at $p$, take $w\in\N$ with $w^2\equiv-k'\pmod p$ and $x_0=p^\ell w$; if (ii) fails, take $x_0=2^\ell$. In both cases the first step, with $(A,B)=(x_0,1)$, gives $b_2^2-\frac4ka_2^2\notin\Z$. \qed

\begin{example}
For $k=18$ and $x_0=3$ we get $x_2=2$ and $1-\frac4{18}\cdot4=\frac19$. Up to $1000$, exactly $98$ values of $k$ fail; every squarefree $k$ works.
\end{example}

\begin{remark}\label{rem:square}
For $k=h^2$, Theorem~\ref{thmC} holds if and only if every \emph{odd} prime factor of $h$ is $\equiv3\pmod4$, so $h$ may be even. Theorem~\ref{thmA} asks for more: by \eqref{eq:key}, the two factors $b\pm\frac2ha$ of $b^2-\frac4{h^2}a^2$ must be squares separately, and this is what excludes $2\mid h$. Condition (i) is inertness in $\Q(\sqrt{-m})$, where $m$ is the squarefree part of $k$; note that inertness differs from non-representability by $X^2+mY^2$ when the class number of discriminant $-4m$ exceeds $1$, and condition (ii) at $2$ is not an inertness condition. As in Theorem~\ref{thmB}, each prime causes at most one failure along a given sequence: after a critical step with $\tau\ge1$ the valuation $v_p(a_n)$ drops below $\kappa/2$, then jumps above $\kappa/2$ and stays there.
\end{remark}

%======================================================================
\section{Other quadratic fields: the recursions \eqref{eq:murec}}\label{sec:mu}

Fix $h\in\N$ and $\mu\in\Z$ with $\mu\ge-1$. For a sequence \eqref{eq:murec} put $S_n=x_0+\dots+x_n+n\mu h$ and $t_n=S_n/h$. Then $S_{n+1}=S_n+\mu h+h^2/S_n$, that is,
\[
t_{n+1}=T_\mu(t_n),\qquad T_\mu(t)=t+\mu+\frac1t,
\]
which is positive for $t>0$ because $\mu\ge-1$. With $t_n=u_n/v_n$ in lowest terms,
\[
u_{n+1}=F_\mu(u_n,v_n):=u_n^2+\mu u_nv_n+v_n^2,\qquad v_{n+1}=u_nv_n,
\]
again without cancellation. Exactly as in Proposition~\ref{prop:key}, $a_{n+1}=hv_n/d_n$ and $b_{n+1}=u_n/d_n$ with $d_n=\gcd(h,u_n)$, and for $n\ge1$
\begin{equation}\label{eq:mukey}
b_{n+1}+\frac{2-\mu}{h}\,a_{n+1}=\frac{(u_{n-1}+v_{n-1})^2}{d_n},\qquad
b_{n+1}-\frac{2+\mu}{h}\,a_{n+1}=\frac{(u_{n-1}-v_{n-1})^2}{d_n}.
\end{equation}
The argument of Step~1 of Section~\ref{sec:main} shows that $c=(\pm2-\mu)/h$ are the only constants for which $b_n+c\,a_n$ could always be a square.

\begin{theorem}\label{thm:mu}
Let $\mu\ge-1$, $\mu\ne2$, and $\delta=\mu-2$. Then $b_n+\frac{2-\mu}{h}a_n$ is a perfect square for every $x_0\in\N$ and every $n\ge2$ if and only if every prime $p\mid h$, with $\alpha=v_p(h)$, satisfies one of the following:
\begin{enumerate}[label=\textup{(\Alph*)},leftmargin=*]
\item $p\nmid\delta$, and $X^2+\mu X+1$ has no root modulo $p$;
\item $p\mid\delta$ with $r=v_p(\delta)$, and either $\alpha\le r$ and $\alpha$ is even, or $\alpha>r$, $r$ is even, $p$ is odd and $\delta/p^r$ is a quadratic non-residue modulo $p$.
\end{enumerate}
The same statement holds for $b_n-\frac{2+\mu}{h}a_n$ with $\delta=-(\mu+2)$. For $\mu=2$, $b_n$ is always a perfect square if and only if $h$ is a perfect square.
\end{theorem}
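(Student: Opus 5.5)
The plan is to reduce everything to $p$-adic valuations using \eqref{eq:mukey}. For $n\ge1$ put $(u,v)=(u_{n-1},v_{n-1})$, so
\[
b_{n+1}+\tfrac{2-\mu}{h}a_{n+1}=\frac{(u+v)^2}{d_n},\qquad u_n=F_\mu(u,v)=(u+v)^2+\delta uv,\qquad d_n=\gcd(h,u_n).
\]
This is a perfect square if and only if, for every $p\mid h$, the number $2v_p(u+v)-\min(\alpha,v_p(u_n))$ is even and nonnegative. Realizability is easy. Taking $x_0=he$ gives $(u_0,v_0)=(e,1)$, so at $n=1$ the pair $(u,v)=(e,1)$ is arbitrary with $v=1$, and $u+v=e+1$ can be any integer $\ge2$. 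Thus sufficiency has to be checked for all coprime pairs $(u,v)$, while necessity only needs pairs $(e,1)$.

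\emph{Primes with $p\nmid\delta$.} If $p\mid u_n$ and also $p\mid u+v$, then $p\mid\delta uv$, hence $p\mid uv$. Together with $p\mid u+v$ this forces $p\mid u$ and $p\mid v$, which is impossible. So whenever $p\mid d_n$, the valuation is $-v_p(d_n)<0$ and the term fails. The condition is therefore that $p\nmid u_n$ for all $n\ge1$.
\begin{itemize}
\item If $X^2+\mu X+1$ has no root mod $p$, then $F_\mu(u,v)\not\equiv0$ for coprime $u,v$: a zero would need $p\nmid v$, and then $u/v$ would be a root.
\item Conversely, a root $e$ (taken positive) gives $x_0=he$ and $p\mid u_1=e^2+\mu e+1$, so the term at $n=2$ fails.
\end{itemize}
This covers $p=2$ as well: for $\mu$ odd the polynomial $X^2+X+1$ is irreducible mod $2$.

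\emph{Primes with $p\mid\delta$, $r=v_p(\delta)$.} Now $u_n\equiv(u+v)^2\pmod p$, so only pairs with $s:=v_p(u+v)\ge1$ matter. For these $u,v$ are $p$-adic units. Then $v_p(u_n)=\min(2s,r)$ if $2s\ne r$, and $v_p(u_n)\ge r$ if $2s=r$. Run through the cases:
\begin{itemize}
\item If $\alpha\le r$, then $\min(\alpha,v_p(u_n))$ is either $2s$ (when $2s<\alpha$) or $\alpha$. So parity holds for every pair if and only if $\alpha$ is even; if $\alpha$ is odd, take $e+1$ divisible by $p^{\alpha}$.
\item If $\alpha>r$ and $2s<r$, the exponent is $2s$, which is harmless. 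If $2s>r$, it is $r$, so $r$ must be even; if $r$ is odd, take $e$ with $v_p(e+1)$ large.
\item If $\alpha>r$ and $2s=r$, write $u+v=p^{r/2}w$ and $\delta=p^r\delta'$. Then $v_p(u_n)=r+v_p(w^2+\delta'uv)$. Since $uv\equiv-u^2\pmod p$, the second term vanishes mod $p$ exactly when $w^2\equiv\delta'u^2$, i.e. when $\delta'$ is a nonzero square mod $p$. If $\delta'$ is a non-residue and $p$ is odd, the exponent is exactly $r$, which is even, so this case is fine. If $\delta'$ is a residue, choose $e$ with $e+1=p^{r/2}w$ and $w^2\equiv\delta'e^2\pmod p$; then $v_p(u_1)\ge r+1$ and the exponent becomes odd. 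A Hensel-type adjustment of $e$ modulo $p^{r/2+1}$ may be needed to make the valuation exactly $r+1$, or at least odd, within $\alpha$.
\end{itemize}

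\emph{The case $p=2$ and the remaining statements.} For $p=2$ the congruence analysis degenerates, because $w^2+\delta'uv$ with all three of $w,u,v$ odd and $\delta'$ odd is always even. I expect to show directly that $\alpha>r$ then always admits a failing choice, explaining why ``$p$ odd'' appears in (B). This sub-case, together with the exact-valuation adjustment above, is the step I expect to be the main obstacle: one has to control $v_2$ of $w^2+\delta'uv$ precisely within the range up to $\alpha$.

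The statement for $b_n-\frac{2+\mu}{h}a_n$ follows identically from the second formula in \eqref{eq:mukey}, using $u_n=(u-v)^2+(\mu+2)uv$. For $\mu=2$ we have $\delta=0$ and $b_{n+1}=(u+v)^2/d_n$. By the same analysis, $d_n=\gcd(h,(u+v)^2)$, which is a square for all $u,v$ precisely when every $\alpha$ is even; the counterexample is again $e+1$ highly divisible by $p$.
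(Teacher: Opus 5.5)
Your framework is the paper's own: the criterion that $2v_p(u+v)-\min(\alpha,v_p(u_n))$ be even and nonnegative, the split $p\nmid\delta$ versus $p\mid\delta$, the three regimes $2s<r$, $2s>r$, $2s=r$, and realizability via $x_0=he$. Your handling of (A), of $\alpha\le r$, of odd $r$, of the non-residue case, and of $\mu=2$ is correct. There is, however, a gap. The necessity of the clause ``$p$ odd and $\delta/p^r$ a non-residue'' in (B) is never established. You only say that a Hensel-type adjustment ``may be needed'' and that $p=2$ is ``the main obstacle''. You also assert that ``the exponent becomes odd'', which is unjustified: $\min(\alpha,v_p(u_1))$ may well be even.

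The missing observation is that in the regime $2s=r<\alpha$ parity is irrelevant; it is the \emph{nonnegativity} half of your own criterion that fails. Write $t=v_p(w^2+\delta'uv)$. Then $v_p(u_n)=r+t$, and the valuation of the term at $p$ is $r-\min(\alpha,r+t)$. As soon as $t\ge1$, whatever its exact size, this is at most $-1$, so the term is not even an integer. No control of $v_p(u_1)$ beyond $v_p(u_1)\ge r+1$ is needed, and this is exactly how the paper closes the case.

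Concretely, the two open cases go as follows.
\begin{enumerate}
\item For odd $p$ with $\delta'\equiv w_0^2\pmod p$, take $e=p^{r/2}w_0-1$. Then $v_p(e+1)=r/2$ exactly, $e\equiv-1\pmod p$, and $w_0^2+\delta'e\equiv w_0^2-\delta'\equiv0\pmod p$. Hence the term at $n=2$ for $x_0=he$ has negative $p$-adic valuation.
\item For $p=2$, odd $r$ already fails through $2s>r$. If $r$ is even, take $e=2^{r/2}-1\ge1$. Then $w=1$, $e$ and $\delta'$ are all odd, so $w^2+\delta'e$ is automatically even, and again the $2$-adic valuation of the term is negative.
\end{enumerate}
With this one remark your proof is complete.
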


For odd $p\nmid\mu^2-4$, condition (A) says that $\legendre{\mu^2-4}{p}=-1$; for $p=2$ it says that $\mu$ is odd.

\begin{proof}
Fix $n\ge1$, let $(u,v)=(u_{n-1},v_{n-1})$, $d=\gcd(h,F_\mu(u,v))$ and $Q=(u+v)^2/d$. The key identity is $F_\mu(u,v)=(u+v)^2+\delta uv$. A positive rational number is a perfect square if and only if all its valuations are even and nonnegative, and $v_p(Q)=2v_p(u+v)$ for $p\nmid h$. So fix $p\mid h$.

\emph{Primes $p\nmid\delta$.} If $p\mid u+v$, then $p\nmid uv$ and $F_\mu\equiv\delta uv\not\equiv0\pmod p$. Hence $p\mid d$ forces $p\nmid u+v$ and $v_p(Q)<0$, so $Q$ is fine at $p$ if and only if $p\nmid F_\mu(u,v)$. Since $p\mid v$ gives $F_\mu\equiv u^2\not\equiv0$, pairs with $p\mid F_\mu(u,v)$ exist if and only if $X^2+\mu X+1$ has a root modulo $p$.

\emph{Primes $p\mid\delta$.} Now $F_\mu\equiv(u+v)^2\pmod p$, so $p\mid F_\mu$ if and only if $w:=v_p(u+v)\ge1$, in which case $u,v$ are units with $uv\equiv-u^2\pmod p$. Put $z=(u+v)/p^w$ and $\delta'=\delta/p^r$. Then
\[
v_p(F_\mu)=\begin{cases}2w,&2w<r,\\ r,&2w>r,\\ r+s,&2w=r,\end{cases}\qquad s=v_p(z^2+\delta'uv)\ge0,
\]
and $v_p(Q)=2w-\min(\alpha,v_p(F_\mu))$. As $z^2+\delta'uv\equiv z^2-\delta'u^2\pmod p$, one can have $s\ge1$ if and only if $\delta'$ is a square modulo $p$ when $p$ is odd, while $s\ge1$ always when $p=2$. If $\alpha<r$, choosing $2w>r$ gives $v_p(Q)=2w-\alpha$, so $\alpha$ must be even, and then $v_p(Q)$ is even and nonnegative in every case. If $\alpha=r$, the possible values are $0$, $2w-r$ and $0$, so $r$ must be even. If $\alpha>r$, the case $2w>r$ gives $2w-r$, so $r$ must be even, and the case $2w=r$ gives $v_p(Q)=-\min(\alpha-r,s)$, which is admissible if and only if $s=0$ always, i.e.\ $p$ is odd and $\delta'$ is a non-residue.

\emph{Realizability.} Every pair needed above can be taken of the form $(u,v)=(e,1)$ with $e\in\N$, and these occur at $n=1$ for $x_0=he$. The form $b-\frac{2+\mu}ha$ follows by replacing $v$ with $-v$, so that $u-v=e-1$ plays the role of $u+v$ and one takes $e=p^wz+1$. The case $\mu=2$ is $\delta=0$.
\end{proof}

\begin{corollary}\label{cor:mu}
\begin{enumerate}[label=\textup{(\alph*)},leftmargin=*]
\item $\mu=0$: Theorem~\ref{thmA}; the prime $2$ is excluded by (B) with $\delta=-2$.
\item $\mu=1$: for $x_{n+1}=h^2/(x_0+\dots+x_n+nh)$, $b_n+\frac1ha_n$ is always a perfect square if and only if every prime factor of $h$ is $\equiv2\pmod3$.
\item $\mu=-1$: for $x_{n+1}=h^2/(x_0+\dots+x_n-nh)$, $b_n+\frac3ha_n$ is always a perfect square if and only if every prime factor of $h$ is $\equiv2\pmod3$.
\item $\mu=3$: $b_n-\frac1ha_n$ is always a perfect square if and only if every prime factor of $h$ is $\equiv\pm2\pmod5$.
\item $\mu=4$: $b_n-\frac2ha_n$ is always a perfect square if and only if every prime factor of $h$ is $\equiv\pm5\pmod{12}$.
\end{enumerate}
\end{corollary}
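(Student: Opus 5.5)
The plan is to derive each item of Corollary~\ref{cor:mu} from Theorem~\ref{thm:mu}. For each $\mu$ we pick the sign of $\delta$ that matches the stated linear form: $\delta=\mu-2$ for $b_n+\frac{2-\mu}{h}a_n$ and $\delta=-(\mu+2)$ for $b_n-\frac{2+\mu}{h}a_n$. We then translate conditions (A) and (B) into congruence conditions on the primes $p\mid h$.

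Case by case:
\begin{itemize}
\item $\mu=0$ with $\delta=-2$. Condition (A) applies to odd $p$: $X^2+1$ has no root modulo $p$ if and only if $p\equiv3\pmod4$. For $p=2$ we have $r=1$, which is odd. So (B) would need $\alpha\le1$ with $\alpha$ even, which is impossible for $p\mid h$, or $\alpha>1$ with $r$ even, which fails. Hence $2$ is excluded and we recover Theorem~\ref{thmA}.
\item $\mu=1$ with $\delta=-1$. No prime divides $\delta$, so only (A) is relevant. $X^2+X+1$ has no root modulo $p$ if and only if $p=2$ (here $\mu$ is odd) or $\left(\frac{-3}{p}\right)=-1$, i.e.\ $p\equiv2\pmod3$. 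The prime $p=3$ is a root case, since $X^2+X+1\equiv(X-1)^2\pmod3$, so it is excluded. This gives exactly $p\equiv2\pmod3$.
\item $\mu=-1$ with $\delta=-3$. For $p\ne3$, condition (A) requires $X^2-X+1$ to be rootless, and its discriminant is again $-3$. So the condition is $p=2$ or $p\equiv2\pmod3$. For $p=3$ we have $r=1$, which is odd, so (B) fails exactly as for $2$ in the case $\mu=0$.
\item $\mu=3$ with the form $b_n-\frac{5}{h}a_n$, i.e.\ $\delta=-5$. The statement writes $b_n-\frac1ha_n$, which corresponds instead to $\delta=\mu-2=1$ with $c=(2-\mu)/h=-1/h$. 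So we use $\delta=1$, and only (A) is relevant. The discriminant of $X^2+3X+1$ is $5$, so the condition is $\left(\frac5p\right)=-1$, i.e.\ $p\equiv\pm2\pmod5$. The prime $p=2$ works since $\mu$ is odd. The prime $p=5$ is excluded because of the double root $X\equiv1$.
\item $\mu=4$ with $\delta=2$ and $c=-2/h$. For odd $p\ne3$, (A) requires $\left(\frac{12}{p}\right)=\left(\frac3p\right)=-1$, which by quadratic reciprocity means $p\equiv\pm5\pmod{12}$. For $p=3$, $X^2+4X+1\equiv(X-1)^2$, so $3$ is excluded. For $p=2$ we have $r=1$, odd, so (B) fails. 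Only $p\equiv\pm5\pmod{12}$ remains.
\end{itemize}

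The one point needing care throughout is the primes dividing $\mu^2-4$. When $p\nmid\delta$, (A) requires checking for a repeated root, and that root always exists, so these primes are excluded. When $p\mid\delta$, we use (B), and $r=1$ is odd in every case that occurs here. The main obstacle is this bookkeeping of the sign conventions and the exceptional primes; there is no new mathematics beyond Theorem~\ref{thm:mu} and quadratic reciprocity.
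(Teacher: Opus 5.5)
Your proposal is correct and is exactly the intended derivation: each stated form is $b_n+\frac{2-\mu}{h}a_n$, so you specialize Theorem~\ref{thm:mu} with $\delta=\mu-2$, read off (A) from the discriminant $\mu^2-4$ and quadratic reciprocity, and exclude the primes dividing $\delta$ by (B) because $r=1$ is odd in every case. The only blemish is the false start $\delta=-5$ in the $\mu=3$ item; delete it, since you then correctly use $\delta=1$.
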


\begin{remark}\label{rem:fields}
Every real quadratic field $K$ contains a unit $\varepsilon>1$ of norm $+1$, and then $K=\Q(\sqrt{\mu^2-4})$ with $\mu=\varepsilon+\varepsilon^{-1}\ge3$; so the recursions \eqref{eq:murec} reach every real quadratic field, while the only imaginary ones are $\Q(i)$ ($\mu=0$) and $\Q(\sqrt{-3})$ ($\mu=\pm1$). The exceptional primes in (B) do occur: for $\mu=6$ and $h=4$ the prime $2$ satisfies (B) with $\alpha=r=2$ although $X^2+6X+1\equiv(X+1)^2\pmod 2$; for $\mu=11$, $h=9$ works but $h=27$ does not; and for $\mu=7$ the form $b-\frac9ha$ works for $h=27$ because $\delta=-9$ and $-1$ is a non-residue modulo $3$.
\end{remark}

\begin{problem}[an Eisenstein companion to Problem~\ref{prob:main}]
Let $h\in\N$. For which $h$ is there a rational $c>0$ such that, for every sequence of positive rationals with $x_0\in\N$ and $x_{n+1}=h^2/(x_0+\dots+x_n+nh)$, the number $c\,a_n+b_n$ is a perfect square for every $n\ge2$? (Answer: $c=1/h$, and every prime factor of $h$ must be $\equiv2\pmod3$.)
\end{problem}

%======================================================================
\section{Computations and questions}\label{sec:final}

All statements were also tested by exhaustive computer search: Theorem~\ref{thmA} for $h\le60$ and $x_0<h^2+2h+60$; Theorem~\ref{thmC} for $k\le1000$; Theorem~\ref{thm:mu} for $-1\le\mu\le20$, both signs and $h\le90$ ($3960$ cases); and the identities \eqref{eq:key}, \eqref{eq:mukey} and Proposition~\ref{prop:orbit} for $n\le9$. No discrepancy was found. The code is available from the first author.

We end with some questions.
\begin{enumerate}[leftmargin=*]
\item For $h$ with prime factors $\equiv1\pmod4$, what proportion of starting values $x_0\le X$ lead to exactly $j$ failures, $0\le j\le\omega_1(h)$? By Remark~\ref{rem:finite} this is governed by the trees of $\F_p$-rational preimages of $0$ under $T$.
\item For the recursions of this paper the exponent $m=2$ is forced. Is there a natural family of rational recursions for which $c\,a_n+b_n$ is always a cube, other than those conjugate to $z\mapsto z^3$?
\item What happens over $\Z[i]$, over $\F_q[t]$, or for other norm forms?
\end{enumerate}

\subsection*{Acknowledgments}
A computer verification and part of the preparation of the manuscript were carried out with the assistance of an AI system (Claude, Anthropic); the authors checked all arguments and take full responsibility for the content. % Adjust to the journal's disclosure policy.

\end{document}